\newtheorem{Theo}{Theorem}
\newtheorem{Lem}{Lemma}
\newtheorem{Cor}{Corollary}
\begin{document}

\title{A note on the asymptotics for incomplete Betafunctions
}


\author{Jan-Christoph Schlage-Puchta
}




\maketitle

\begin{abstract}
We determine the asymptotic behaviour of certain incomplete Betafunctions.
\end{abstract}

\section{Introdution and results}
For integers $k,\ell\geq 1$ define 
\[
P_{k,\ell} = \frac{(k+\ell)!}{(k-1)!\ell!}\int_{k/(k+\ell)}^1 t^{k-1}(1-t)^\ell\;dt.
\]
By repeated partial integration we obtain the representation
\[
P_{k, \ell} = \left(\frac{\ell}{\ell+k}\right)^{k+\ell}\sum_{\nu=0}^{k-1}\binom{k+\ell}{\nu}\left(\frac{k}{\ell}\right)^\nu.
\]
Vietoris\cite{Vietoris} showed that $P_{k,\ell}\leq\frac{1}{2}$. Alzer and Kwong \cite{Alzer} proved that $P_{k,\ell}\geq\frac{1}{4}$ holds for all $k, \ell$. Interest in bounds for $P_{k,\ell}$ stems from application to statistics, see \cite{Uhlmann}.

In this note we consider the asymptotic behaviour of $P_{k,\ell}$. We prove the following.
\begin{Theo}
\label{thm:main}
We have
\begin{equation}
\label{eq:k small}
P_{k, l} = e^{-k}\sum_{\nu=0}^{k-1}\frac{k^\nu}{\nu!} + \mathcal{O}\left(\frac{k^2}{\ell}\right),
\end{equation}
\begin{equation}
\label{eq:ell small}
P_{k,\ell} = 1-e^{-\ell}\sum_{\nu=0}^{\ell}\frac{\ell^\nu}{\nu!} + \mathcal{O}\left(\frac{\ell^2}{k}\right)
\end{equation}
and
\begin{equation}
\label{eq:general}
P_{k, \ell} = \frac{1}{2}+\mathcal{O}\left(\frac{1}{\sqrt{\min(k, \ell)}}\right).
\end{equation}
\end{Theo}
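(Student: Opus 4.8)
The plan is to translate $P_{k,\ell}$ into a binomial tail probability. With $n=k+\ell$ and $p=k/n$ the representation quoted above becomes
\[
P_{k,\ell}=\sum_{\nu=0}^{k-1}\binom{n}{\nu}p^{\nu}(1-p)^{n-\nu}=\Pr[S\le k-1],
\]
where $S$ is binomial with parameters $n$ and $p$, so that $\mathbb{E}[S]=np=k$ exactly and $\sigma^{2}:=np(1-p)=k\ell/(k+\ell)$. Thus $P_{k,\ell}$ is precisely the probability that $S$ lies strictly below its integer mean; the value $\tfrac12$ is then the assertion that, once $S$ is approximately symmetric about $k$, half of the mass sits on each side. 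Note also $1/\sigma=\sqrt{1/k+1/\ell}$, whence $1/\sigma\le\sqrt{2/\min(k,\ell)}$ and conversely $1/\sigma\ge 1/\sqrt{\min(k,\ell)}$; so the claimed error term is exactly $\mathcal{O}(1/\sigma)$, and it suffices to prove $P_{k,\ell}=\tfrac12+\mathcal{O}(1/\sigma)$.

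The quickest route is the normal approximation. The Berry--Esseen inequality applied to the sum of $n$ independent Bernoulli$(p)$ variables bounds the deviation of the distribution function of $(S-\mathbb{E}[S])/\sigma$ from $\Phi$ by $C\,(p^{2}+(1-p)^{2})/\sigma\le C/\sigma$, uniformly in $p$ because $p^{2}+(1-p)^{2}\le 1$. Evaluating at the point $(k-1-\mathbb{E}[S])/\sigma=-1/\sigma$ gives $|P_{k,\ell}-\Phi(-1/\sigma)|\le C/\sigma$, and since $\Phi(-1/\sigma)=\tfrac12+\mathcal{O}(1/\sigma)$ the estimate $P_{k,\ell}=\tfrac12+\mathcal{O}(1/\sigma)$ follows at once.

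If a self-contained argument is preferred, I would instead use the exact decomposition
\[
P_{k,\ell}=\frac12-\frac12\Pr[S=k]+\frac12\bigl(\Pr[S<k]-\Pr[S>k]\bigr).
\]
Stirling's formula gives $\Pr[S=k]=\binom{n}{k}p^{k}(1-p)^{\ell}=\mathcal{O}(1/\sigma)$ for the central atom, so the whole difficulty, and the main obstacle, is to show that the \emph{asymmetry} $\Pr[S<k]-\Pr[S>k]$ is also $\mathcal{O}(1/\sigma)$. Writing $p_{k\pm i}=\Pr[S=k\pm i]$ one has the exact ratio
\[
\frac{p_{k+i}}{p_{k-i}}=\prod_{t=-i+1}^{i}\frac{1+t/\ell}{1+t/k},
\]
whose logarithm is $i\,(1/\ell-1/k)+\mathcal{O}\bigl(i^{3}(1/k^{2}+1/\ell^{2})\bigr)$ for $i$ small relative to $k,\ell$. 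Inserting this together with the local estimate $p_{k-i}=\mathcal{O}(1/\sigma)$, valid on the window $|i|\lesssim\sigma$ that carries essentially all the mass, one finds that $\sum_{i\ge1}(p_{k-i}-p_{k+i})$ is of the order of the binomial skewness, whose modulus $|\ell-k|/\sqrt{k\ell(k+\ell)}$ is at most $1/\sigma$ because $|\ell-k|\le k+\ell$. Keeping these estimates uniform as $p=k/(k+\ell)$ ranges over $(0,1)$ is the delicate point, but it is automatic here since every error is governed by the single scale $1/\sigma$.
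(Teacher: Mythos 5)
Your proposal proves only the third of the theorem's three assertions. The Poisson-regime estimates (\ref{eq:k small}) and (\ref{eq:ell small}) are nowhere addressed, and they do not follow from anything you establish: your Berry--Esseen argument yields an error $\mathcal{O}(1/\sigma)=\mathcal{O}(1/\sqrt{\min(k,\ell)})$, which for fixed $k$ and $\ell\to\infty$ is of constant size, and hence cannot distinguish $\tfrac12$ from the true limit $e^{-k}\sum_{\nu=0}^{k-1}k^\nu/\nu!$, let alone produce the error term $\mathcal{O}(k^2/\ell)$. So two thirds of the theorem is left unproved. The gap can in fact be closed inside your own probabilistic framework: since $S$ is a sum of $n=k+\ell$ independent Bernoulli$(p)$ variables with $p=k/n$, Le Cam's inequality bounds the total variation distance between the law of $S$ and the Poisson law of mean $np=k$ by $np^{2}=k^{2}/(k+\ell)\leq k^{2}/\ell$, which gives (\ref{eq:k small}) at once; applying the same bound to $n-S$, which is binomial with parameters $n$ and $\ell/n$ and mean $\ell$, and writing $P_{k,\ell}=\Pr[n-S\geq \ell+1]=1-\Pr[n-S\leq\ell]$, gives (\ref{eq:ell small}). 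But as written this is missing; the paper proves both statements by direct manipulation of the finite sum defining $P_{k,\ell}$.

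For (\ref{eq:general}) itself your argument is correct and genuinely different from the paper's. The paper works from Raab's product representation $P_{k,\ell}=U_{k,\ell}V_{k,\ell}$ (Theorem~\ref{thm:Raab}) and estimates the resulting integrals and series in Lemmas~\ref{Lem:integrals}--\ref{Lem:V}, whereas you identify $P_{k,\ell}$ with the binomial tail $\Pr[S\leq k-1]$, observe that the cut-off $k$ is exactly the mean $np$, and invoke Berry--Esseen with the uniform bound $C(p^{2}+(1-p)^{2})/\sigma\leq C/\sigma$; together with $1/\sigma=\sqrt{1/k+1/\ell}\asymp 1/\sqrt{\min(k,\ell)}$ and $\Phi(-1/\sigma)=\tfrac12+\mathcal{O}(1/\sigma)$ this is a complete and much shorter proof of (\ref{eq:general}), at the price of citing Berry--Esseen rather than being self-contained. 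Your ``self-contained'' alternative, by contrast, is only a sketch: the claim that the asymmetry $\Pr[S<k]-\Pr[S>k]$ is of the order of the skewness is asserted rather than proved, and making it rigorous --- controlling the product formula for $p_{k+i}/p_{k-i}$ uniformly over the central window together with the mass outside it --- is precisely the work that the Berry--Esseen citation spares you.
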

Note that the first two estimates are good if one of the parameters $k, \ell$ is rather small, whereas the third one gives information in the general case.

Comparing (\ref{eq:k small}) and (\ref{eq:ell small}) with Vietoris' result that $P_{k,\ell}\leq\frac{1}{2}$ we see that
\[
\sum_{\nu=0}^{\ell-1}\frac{\ell^\nu}{\nu!}<\frac{1}{2}e^\ell<\sum_{\nu=0}^{\ell}\frac{\ell^\nu}{\nu!}.
\]
Note that equality is impossible as $e$ is transcendental.
A more precise version of this inequality has been asked by Ramanujan (Question 294) and was answered by Karamata~\cite{Karamata}. For a detailed discussion of this result, Uhlmann's inequalities \cite{Uhlmann} and Vietoris bound we refer the reader to the historical notes by Vietoris \cite{V2}.

From Theorem~\ref{thm:main} we deduce the following.
\begin{Cor}
\label{Cor:accumulation}
The only accumulation points of the set $\{P_{k,\ell}:k, \ell\in\mathbb{N}\}$ are $\frac{1}{2}$ and the real numbers $e^{-k}\sum_{\nu=1}^{k-1}\frac{k^\nu}{\nu!}$ and $1-e^{-\ell}\sum_{\nu=1}^{\ell}\frac{\ell^\nu}{\nu!}$.
\end{Cor}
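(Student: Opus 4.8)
The plan is to read the accumulation points straight off the three estimates of Theorem~\ref{thm:main}, according to whether $k$ stays bounded, $\ell$ stays bounded, or both tend to infinity. Write $a_k$ and $b_\ell$ for the main terms $e^{-k}\sum_{\nu=0}^{k-1}\frac{k^\nu}{\nu!}$ and $1-e^{-\ell}\sum_{\nu=0}^{\ell}\frac{\ell^\nu}{\nu!}$ of (\ref{eq:k small}) and (\ref{eq:ell small}). For each fixed $k$ the error $\mathcal{O}(k^{2}/\ell)$ in (\ref{eq:k small}) tends to $0$ as $\ell\to\infty$, so $P_{k,\ell}\to a_k$; by symmetry $P_{k,\ell}\to b_\ell$ as $k\to\infty$ for fixed $\ell$; and along the diagonal $k=\ell\to\infty$ the bound (\ref{eq:general}) gives $P_{k,k}\to\frac12$. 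Thus $\frac12$ and every $a_k$, $b_\ell$ is a limit of values drawn from the set.

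To upgrade these limits to genuine accumulation points I would check that the approximating values never coincide with the limit. The finite-sum representation shows $P_{k,\ell}\in\mathbb{Q}$, since $\frac{\ell}{\ell+k}$ is rational and is raised to the integer power $k+\ell$ while each $\binom{k+\ell}{\nu}(k/\ell)^\nu$ is rational. On the other hand $a_k$ and $b_\ell$ are transcendental, as $e^{-k}$ (resp. $e^{-\ell}$) multiplies a nonzero rational; hence $P_{k,\ell}\neq a_k$ for all $\ell$, and the convergence $P_{k,\ell}\to a_k$ places points of the set, all distinct from $a_k$, in every neighbourhood of $a_k$, and similarly for $b_\ell$. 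For the rational value $\frac12$ the transcendence argument fails, so here I would use the explicit diagonal evaluation $P_{k,k}=\frac12\bigl(1-\binom{2k}{k}2^{-2k}\bigr)<\frac12$, which exhibits strictly smaller values tending to $\frac12$. This confirms that each listed number is indeed an accumulation point.

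For the converse I would take an arbitrary accumulation point $L$, realised by pairwise distinct pairs $(k_n,\ell_n)$ with $P_{k_n,\ell_n}\to L$, and argue by cases on $m_n=\min(k_n,\ell_n)$. If $m_n$ is unbounded, pass to a subsequence with $m_n\to\infty$; then (\ref{eq:general}) forces $L=\frac12$. If $m_n$ is bounded, then along a subsequence either $k_n\le C$ or $\ell_n\le C$, and by the pigeonhole principle some fixed value, say $k_n=k_0$, recurs infinitely often. Since the pairs are distinct, the accompanying $\ell_n$ are distinct integers, hence unbounded, and may be taken with $\ell_n\to\infty$; then (\ref{eq:k small}) gives $L=a_{k_0}$, and symmetrically a bounded coordinate $\ell_n=\ell_0$ gives $L=b_{\ell_0}$. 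These cases are exhaustive, so $L$ lies among the listed numbers.

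The analytic input — that each error term vanishes in the relevant limit — is immediate from Theorem~\ref{thm:main}, so I expect the only delicate points to be organisational. The main obstacle is making the word ``only'' rigorous: one must run the case distinction so that distinctness of the points genuinely drives the free parameter to infinity (rather than letting the sequence stall on finitely many pairs), and one must verify that none of the limit values is isolated, for which the rationality-versus-transcendence dichotomy, together with the diagonal identity for $\frac12$, is the clean tool. Neither step is deep, but both must be stated with care.
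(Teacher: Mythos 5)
Your proof is correct, and its core --- the case distinction on whether $\min(k_n,\ell_n)$ stays bounded, followed by passing to a subsequence on which one coordinate is constant and using distinctness of the pairs to force the other coordinate to infinity --- is exactly the argument the paper gives (the paper states it more tersely, without spelling out the pigeonhole step). Where you go beyond the paper is the other direction: the paper's proof only shows that every accumulation point lies in the list, and silently takes for granted that the listed values \emph{are} accumulation points. You close this gap explicitly, and both of your tools are sound: $P_{k,\ell}$ is rational by the finite-sum representation, while $e^{-k}\sum_{\nu=0}^{k-1}\frac{k^\nu}{\nu!}$ and $1-e^{-\ell}\sum_{\nu=0}^{\ell}\frac{\ell^\nu}{\nu!}$ are transcendental (a point the paper itself invokes in a different context, when comparing with Vietoris' bound), so the approximating values never equal the limit; and your diagonal identity $P_{k,k}=\frac{1}{2}\left(1-\binom{2k}{k}4^{-k}\right)$, which follows from the symmetry $\sum_{\nu=0}^{k-1}\binom{2k}{\nu}=\frac{1}{2}\left(4^k-\binom{2k}{k}\right)$, correctly exhibits values strictly below $\frac{1}{2}$ converging to $\frac{1}{2}$. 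So your write-up is a completion of the paper's proof rather than a different route; the only discrepancy worth flagging is notational: the corollary as printed starts the sums at $\nu=1$, whereas Theorem~\ref{thm:main} (and your $a_k$, $b_\ell$) start them at $\nu=0$ --- this is evidently a typo in the paper's statement, and your version is the consistent one.
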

\begin{proof}
Suppose that $(k_n), (\ell_n)$ are integer sequences such that the pairs $(k_n, \ell_n)$ are all distinct and that the sequence $(P_{k_n, \ell_n})$ converges. If both $k_n, \ell_n$ tend to infinity, then by (\ref{eq:general}) we have that $P_{k_n, \ell_n}$ tends to $\frac{1}{2}$. If one of these sequences does not tend to infinity, then we can pass to a subsequence and assume that $k_n$ or $\ell_n$ is constant. Then our claim follows from (\ref{eq:k small}) or (\ref{eq:ell small}).
\end{proof}
Together with Vietoris bound we obtain the following.
\begin{Cor}
We have
\[
\lim_{\ell\rightarrow\infty} P_{k, \ell} \geq \frac{1}{2} - \frac{1}{\sqrt{2\pi k}}.
\]
\end{Cor}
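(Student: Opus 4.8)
The plan is to let $\ell\to\infty$ with $k$ held fixed in (\ref{eq:k small}) and then feed the resulting limit into the inequality displayed just before this corollary. First, since the error term $\mathcal{O}(k^2/\ell)$ in (\ref{eq:k small}) tends to $0$ as $\ell\to\infty$ for fixed $k$, the limit exists and
\[
\lim_{\ell\to\infty}P_{k,\ell}=e^{-k}\sum_{\nu=0}^{k-1}\frac{k^\nu}{\nu!}.
\]
Thus it suffices to bound this right-hand side from below, and the whole problem reduces to a statement about the partial exponential sum.

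Next I would invoke the right half of the inequality obtained above from (\ref{eq:ell small}) and Vietoris' bound, namely $\tfrac12 e^\ell<\sum_{\nu=0}^{\ell}\ell^\nu/\nu!$, which holds for every integer $\ell\ge1$; renaming $\ell$ to $k$ it gives $\tfrac12 e^k<\sum_{\nu=0}^{k}k^\nu/\nu!$. Dividing by $e^k$ and splitting off the top term $\nu=k$ yields
\[
\frac12<e^{-k}\sum_{\nu=0}^{k-1}\frac{k^\nu}{\nu!}+e^{-k}\frac{k^k}{k!},
\]
so that
\[
e^{-k}\sum_{\nu=0}^{k-1}\frac{k^\nu}{\nu!}>\frac12-e^{-k}\frac{k^k}{k!}.
\]

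It then remains only to estimate the single leftover term $e^{-k}k^k/k!$. Using the Stirling lower bound $k!>\sqrt{2\pi k}\,(k/e)^k$ one gets $e^{-k}k^k/k!<1/\sqrt{2\pi k}$, and combining this with the previous display proves the claim (in fact with strict inequality, which is consistent with and marginally stronger than the stated bound with $\ge$). The only genuine computation is this last Stirling estimate, which is exactly what makes the leftover term match the target error $1/\sqrt{2\pi k}$; everything else is a direct combination of (\ref{eq:k small}) with the already-established inequality. Consequently I do not expect a real obstacle here, and the point that requires the most care is simply choosing the form of Stirling's inequality with the correct constant $\sqrt{2\pi k}$ rather than a cruder bound.
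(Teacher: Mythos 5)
Your proof is correct and is essentially the paper's own argument in slightly different packaging: the paper applies Vietoris' bound to $P_{\ell,k}$ and lets $\ell\to\infty$ in the sum $P_{k,\ell}+P_{\ell,k}$, which after splitting off the term $\nu=k$ is exactly your route through the displayed inequality $\tfrac12 e^k<\sum_{\nu=0}^{k}k^\nu/\nu!$ (itself a consequence of (\ref{eq:ell small}) and Vietoris), followed by the same Stirling estimate $k^k/(e^k k!)\leq 1/\sqrt{2\pi k}$. The only difference is bookkeeping, so nothing further is needed.
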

\begin{proof}
We have
\[
\lim_{\ell\rightarrow\infty}\left(P_{k, \ell}+P_{\ell, k}\right) = 1-\frac{k^k}{e^k k!},
\]
using Stirling's formula in the form $k!=\left(\frac{k}{e}\right)^k\sqrt{2\pi k}e^{\theta/12 k}$ with $0\leq\theta\leq 1$, and $P_{\ell, k}\leq \frac{1}{2}$, we conclude
\[
\lim_{\ell\rightarrow\infty} P_{k, \ell} \geq \frac{1}{2}-\frac{k^k}{e^k k!}\geq \frac{1}{2}-\frac{1}{\sqrt{2\pi k}}.
\]
\end{proof}

\section{Preliminary estimates}

For the proof of (\ref{eq:general}) we use another representation of $P_{k,\ell}$, which is due to Raab \cite{Raab}.
\begin{Theo}
\label{thm:Raab}
We have $P_{k,\ell}=U_{k,\ell} V_{k,\ell}$, where
\[
U_{k,\ell} = \exp\int_0^\infty \frac{1}{t}\left(\frac{1}{e^t-1}-\frac{1}{t}+\frac{1}{2}\right)\left(e^{-(k+\ell)t}-e^{-kt}-e^{-\ell t}\right)\;dt,
\]
\[
V_{k, \ell} = \frac{\sqrt{\ell}}{2\pi}\sum_{\nu=1}^\infty\frac{c_\nu(k/\ell)}{\sqrt{\nu}(\nu+\ell)},
\]
and
\[
c_\nu(x) = \exp\int_0^\infty \frac{1}{t}\left(\frac{1}{e^t-1}-\frac{1}{t}+\frac{1}{2}\right)\left(e^{-\nu(1+x)t}-e^{-\nu xt}-e^{-\nu t}\right)\;dt.
\]
\end{Theo}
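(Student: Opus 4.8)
The plan is to first strip away the exponential integrals by recognising them as values of Binet's function. Binet's second formula states that for $\operatorname{Re} z>0$
\[
\mu(z) := \log\Gamma(z) - \left(z-\tfrac12\right)\log z + z - \tfrac12\log(2\pi) = \int_0^\infty \frac1t\left(\frac{1}{e^t-1}-\frac1t+\frac12\right)e^{-zt}\,dt,
\]
so that $e^{\mu(z)} = \Gamma(z)\big/\bigl(\sqrt{2\pi}\,z^{z-1/2}e^{-z}\bigr)$. Reading the exponents defining $U_{k,\ell}$ and $c_\nu$ as differences of such integrals, one gets the closed forms $U_{k,\ell}=\exp\bigl(\mu(k+\ell)-\mu(k)-\mu(\ell)\bigr)$ and $c_\nu(x)=\exp\bigl(\mu(\nu(1+x))-\mu(\nu x)-\mu(\nu)\bigr)$, i.e.\ explicit ratios of Gamma functions multiplied by elementary Stirling factors. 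After this substitution the asserted identity $P_{k,\ell}=U_{k,\ell}V_{k,\ell}$ involves no special functions beyond $\Gamma$, and $U_{k,\ell}$ is precisely the Stirling correction attached to $\Gamma(k+\ell)/(\Gamma(k)\Gamma(\ell))$.

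Next I would represent $P_{k,\ell}$ by a contour integral suited to a saddle-point evaluation. Writing $n=k+\ell$, $p=k/n$, $q=\ell/n$, the binomial-sum form quoted in the introduction identifies $P_{k,\ell}$ with the lower tail $\Pr[\mathrm{Bin}(n,p)\le k-1]$, and extracting this partial sum from the generating function $(q+pz)^n$ yields
\[
P_{k,\ell} = \frac{1}{2\pi i}\oint \frac{(q+pz)^n}{(1-z)\,z^{k}}\,dz
\]
along a small positively oriented loop around the origin with $0<r<1$. The phase $g(z)=n\log(q+pz)-k\log z$ has its unique saddle precisely at $z=1$, where $g(1)=0$ and $g''(1)=k\ell/n$; crucially the saddle coincides with the pole of $(1-z)^{-1}$. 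It is this coincidence that forces the leading value $\tfrac12$ and makes $U_{k,\ell}$, the Stirling constant of the saddle, appear as an overall factor.

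The third step is to carry out the evaluation exactly rather than merely asymptotically. I would deform the loop onto the steepest-descent path through $z=1$ and, after a change of variables rescaling the fluctuation by $\sqrt{n/(k\ell)}$, expand the integrand into a form whose Mellin--Barnes (or Hankel) representation has simple poles at the positive integers $\nu$. Summing the residues should reproduce the series $\sum_{\nu\ge1} c_\nu(k/\ell)/\bigl(\sqrt\nu(\nu+\ell)\bigr)$: the denominators $\nu+\ell$ arise from the pole locations, the factors $\sqrt\nu$ from the half-integer Stirling exponents, and each $c_\nu$ from the Gamma factors produced when a residue is re-expressed through Binet's formula, while the prefactor $\tfrac{\sqrt\ell}{2\pi}$ collects the remaining constants. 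Two internal checks support this: the factor at $\nu=\ell$ satisfies $c_\ell(k/\ell)=U_{k,\ell}$, and $\sum_{\nu\ge1}1/\bigl(\sqrt\nu(\nu+\ell)\bigr)\sim \pi/\sqrt\ell$, which returns $V_{k,\ell}\to\tfrac12$ and hence $P_{k,\ell}\to\tfrac12$ as $k,\ell\to\infty$.

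The main obstacle is the exact bookkeeping of this residue expansion. One must justify the contour deformation (convergence on the deformed path and vanishing of the connecting arcs), establish absolute convergence of the resulting series — the summand decays like $\nu^{-3/2}$ — and, most delicately, verify term by term that the elementary and Gamma-type factors split off as the single common factor $U_{k,\ell}$ times the stated per-$\nu$ factor, with the constant $\tfrac{1}{2\pi}$ and the half-integer powers $\sqrt\nu$ emerging correctly. Matching these constants, rather than recognising the analytic structure, is where the real effort lies.
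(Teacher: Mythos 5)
First, a point of reference: the paper does not prove this statement at all --- Theorem~\ref{thm:Raab} is quoted as Raab's theorem, with the text merely saying the representation ``is due to Raab \cite{Raab}''. So you are not competing against an internal argument; you are attempting to reconstruct a proof the paper outsources to the literature. Within your attempt, the first two steps are correct and checkable. The integral is indeed Binet's formula for $\log\Gamma$, so $U_{k,\ell}=\exp\bigl(\mu(k+\ell)-\mu(k)-\mu(\ell)\bigr)$ is exactly the Stirling correction of $\Gamma(k+\ell)/(\Gamma(k)\Gamma(\ell))$, and $c_\nu(x)=\exp\bigl(\mu(\nu(1+x))-\mu(\nu x)-\mu(\nu)\bigr)$; your observation that $c_\ell(k/\ell)=U_{k,\ell}$ is a genuine consistency check. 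Likewise, $P_{k,\ell}$ is the binomial tail probability $\Pr[\mathrm{Bin}(n,k/n)\le k-1]$ with $n=k+\ell$, the Cauchy-integral extraction with kernel $1/\bigl((1-z)z^{k}\bigr)$ on a circle of radius $r<1$ is valid, and the saddle of $n\log(q+pz)-k\log z$ does sit exactly at the pole $z=1$ with $g''(1)=k\ell/n$.

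The gap is that your third step --- the only step that actually \emph{is} the theorem --- is never executed, and as described it supplies no mechanism for the exact identity. Steepest descent through a saddle yields, via Watson's lemma, an \emph{asymptotic} expansion in powers of $1/n$; Raab's statement is an \emph{exact, convergent} one, whose right-hand side has a very specific structure: viewed at fixed ratio $x=k/\ell$, the series $\frac{\sqrt{\ell}}{2\pi}\sum_{\nu\ge1}c_\nu(x)/\bigl(\sqrt{\nu}(\nu+\ell)\bigr)$ is a partial-fraction (Mittag--Leffler type) expansion in the variable $\ell$, with simple poles at $\ell=-\nu$ and residue data $c_\nu(x)/\sqrt{\nu}$ built from Binet's function at the scaled arguments $\nu(1+x),\nu x,\nu$. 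Nothing in your sketch produces poles at the negative integers in $\ell$, the prefactor $\sqrt{\ell}/(2\pi)$, or the reason the per-$\nu$ correction is the \emph{same} Binet exponential rescaled by $\nu$; the sentence ``summing the residues should reproduce the series'' is a hope, not a construction, and you concede yourself that this bookkeeping ``is where the real effort lies''. Since everything preceding it is routine (Binet's formula plus a standard tail representation), what is missing is not bookkeeping but the actual engine of the identity. As it stands, the proposal is a sensible analysis plan with correct preliminaries, not a proof.
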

We first compute the occurring integrals.
\begin{Lem}
\label{Lem:integrals}
We have for $x\geq 1$
\[
\int_0^\infty \frac{1}{t}\left(\frac{1}{e^t-1}-\frac{1}{t}+\frac{1}{2}\right) e^{-xt}\;dt =  \frac{1}{12x} + \mathcal{O}\left(\frac{1}{x^2}\right)
\]
and
\[
\int_0^\infty \frac{1}{t}\left(\frac{1}{e^t-1}-\frac{1}{t}+\frac{1}{2}\right) e^{-xt}\;dt = \frac{1}{2}\log x+ C + \mathcal{O}(x)
\]
for $0<x\leq 1$, where $C$ is some constant. 
\end{Lem}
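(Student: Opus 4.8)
The two estimates are the large- and small-argument asymptotics of a single function, and I would obtain each by isolating the relevant boundary behaviour of the integrand. Write $g(t)=\frac{1}{e^t-1}-\frac1t+\frac12$, so that the quantity in question is $I(x)=\int_0^\infty \frac{g(t)}{t}e^{-xt}\,dt$. Two facts drive everything. Near $t=0$ the classical expansion $\frac{1}{e^t-1}=\frac1t-\frac12+\frac{t}{12}-\frac{t^3}{720}+\cdots$ gives $g(t)=\frac{t}{12}+\mathcal{O}(t^3)$, hence $\frac{g(t)}{t}=\frac1{12}+\mathcal{O}(t^2)$; and as $t\to\infty$ we have $g(t)=\frac12-\frac1t+\mathcal{O}(e^{-t})$, hence $\frac{g(t)}{t}=\frac1{2t}+\mathcal{O}(t^{-2})$. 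Thus the integrand is bounded near $0$ with value $\frac1{12}$, and decays like $\frac1{2t}$ at infinity; the first feature governs large $x$, the second governs small $x$.

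For the regime $x\ge1$, the mass of $e^{-xt}$ concentrates near $t=0$, so I would subtract off the limiting value,
\[
I(x)=\frac1{12}\int_0^\infty e^{-xt}\,dt+\int_0^\infty\Big(\frac{g(t)}{t}-\frac1{12}\Big)e^{-xt}\,dt=\frac1{12x}+R(x).
\]
The function $\frac{g(t)}{t}-\frac1{12}$ is $\mathcal{O}(t^2)$ near $0$, is bounded, and tends to $-\frac1{12}$ at infinity, so its quotient by $t$ is bounded on $(0,\infty)$; hence $\frac{g(t)}{t}-\frac1{12}=\mathcal{O}(t)$ uniformly and $R(x)=\mathcal{O}\big(\int_0^\infty t\,e^{-xt}\,dt\big)=\mathcal{O}(x^{-2})$, which is the claimed bound. (Using the sharper $\mathcal{O}(t^2)$ even gives $\mathcal{O}(x^{-3})$.)

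For $0<x\le1$ the logarithm must come from the $\frac1{2t}$ tail, and the cleanest device is a Frullani integral. I would split $\frac{g(t)}{t}=\frac12\cdot\frac{1-e^{-t}}{t}+\frac{h(t)}{t}$ with $h(t)=g(t)-\frac12(1-e^{-t})$, chosen so that $h(t)=\mathcal{O}(t)$ at $0$ and $h(t)=-\frac1t+\mathcal{O}(e^{-t})$ at infinity. The first piece is exactly Frullani,
\[
\frac12\int_0^\infty\frac{e^{-xt}-e^{-(x+1)t}}{t}\,dt=\frac12\log\frac{x+1}{x}=\tfrac12\log\tfrac1x+\mathcal{O}(x),
\]
which produces the leading logarithm. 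For the remaining piece $\frac{h(t)}{t}$ is bounded at $0$ and $\sim -t^{-2}$ at infinity, hence absolutely integrable, so $\int_0^\infty \frac{h(t)}{t}e^{-xt}\,dt$ converges as $x\to0$ to a constant $C_0$, which is the constant $C$ in the statement. The hard part is bounding the error here, and it is genuinely subtle: because $\frac{h(t)}{t}\sim -t^{-2}$, the weight $t\cdot|\frac{h(t)}{t}|\sim t^{-1}$ is \emph{not} integrable at infinity, so the naive estimate $|e^{-xt}-1|\le xt$ fails globally. I would split at $t=1/x$: on $t\le 1/x$ use $|e^{-xt}-1|\le xt$ with $\int_1^{1/x}t^{-1}\,dt=\mathcal{O}(\log\tfrac1x)$, and on $t>1/x$ use $|e^{-xt}-1|\le1$ with $\int_{1/x}^\infty t^{-2}\,dt=\mathcal{O}(x)$, giving $\int_0^\infty\frac{h(t)}{t}(e^{-xt}-1)\,dt=\mathcal{O}(x\log\tfrac1x)$. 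Altogether $I(x)=\tfrac12\log\tfrac1x+C+\mathcal{O}(x\log\tfrac1x)$.

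As a cross-check and to pin down the constant, I would note that $I(x)$ is precisely Binet's function,
\[
I(x)=\log\Gamma(x)-\big(x-\tfrac12\big)\log x+x-\tfrac12\log(2\pi),
\]
whose asymptotic expansions at $\infty$ and at $0$ reproduce both formulas and identify $C=-\tfrac12\log(2\pi)$; the expansion at $0$, using $\log\Gamma(x)=-\log x-\gamma x+\mathcal{O}(x^2)$, also exhibits the true secondary term $-x\log x$ that is responsible for the $x\log\tfrac1x$ size of the error in the second estimate.
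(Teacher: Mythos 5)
Your treatment of the $x\ge 1$ estimate coincides with the paper's: both replace the integrand by its value $\frac{1}{12}$ at $t=0$, observe that the difference is $\mathcal{O}(t)$ uniformly on $(0,\infty)$, and integrate against $e^{-xt}$. (Your parenthetical remark that the error is really $\mathcal{O}(x^{-3})$ is correct, consistent with the absence of an $x^{-2}$ term in the Stirling series.) For $0<x\le 1$ your route is genuinely different. The paper splits the \emph{range} of integration at $t=1$: on $[0,1]$ it expands $e^{-xt}=1-xt+\mathcal{O}(x^2t^2)$, and on $[1,\infty)$ it decomposes the integrand into the three pieces $\frac{e^{-xt}}{t(e^t-1)}$, $-\frac{e^{-xt}}{t^2}$ and $\frac{e^{-xt}}{2t}$, extracting the logarithm from the last via the substitution $s=xt$. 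You instead split the \emph{integrand} once and for all, peeling off the exact Frullani integral $\frac{1}{2}\log\frac{x+1}{x}$ and leaving an absolutely integrable remainder $h(t)/t$, whose deviation from its limit you control by splitting at $t=1/x$; the logarithm thus comes from a single closed-form evaluation.

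The important point is that what you prove, namely $\frac{1}{2}\log\frac{1}{x}+C+\mathcal{O}(x\log\frac{1}{x})$, is the correct statement, and it differs from the printed lemma in two respects. First, the sign: since the integrand is positive and behaves like $\frac{e^{-xt}}{2t}$ at infinity, the integral tends to $+\infty$ as $x\to 0^{+}$, so the leading term must be $-\frac{1}{2}\log x$, not $+\frac{1}{2}\log x$; this is a typo in the lemma, as the paper's own proof arrives at $-\frac{1}{2}\log x+C_3$, and the proof of Lemma~\ref{Lem:c} applies the lemma with this sign (the integral enters there with a minus sign, producing $+\frac{1}{2}\log(\nu x)$). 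Second, the error term: the printed $\mathcal{O}(x)$ cannot be achieved, because, as your Binet cross-check shows, the expansion contains a genuine secondary term $-x\log x$. Correspondingly, the paper's argument for $\mathcal{O}(x)$ fails at one precise point: the assertion that $\int_1^{\infty}e^{-xt}t^{-2}\,dt$ is differentiable from the right at $0$ with bounded second derivative. It is not — its derivative $-\int_1^{\infty}e^{-xt}t^{-1}\,dt$ behaves like $\log x$, and the integral equals $1+x\log x+\mathcal{O}(x)$, which is exactly the term the paper loses. Your weaker error term is all the rest of the paper needs: in the case $\nu x\le 1$ of Lemma~\ref{Lem:c} it yields $c_\nu(x)=K\sqrt{\nu x}+\mathcal{O}(\sqrt{x/\nu}+(\nu x)^{3/2}\log\frac{2}{\nu x})$, and the only subsequent use, the bound $c_\nu(k/\ell)\ll\sqrt{\nu k/\ell}$ in Lemma~\ref{Lem:V}, is unaffected. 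One cosmetic correction: state your error as $\mathcal{O}(x(1+\log\frac{1}{x}))$, since $x\log\frac{1}{x}$ vanishes at $x=1$ while the discarded $\frac{1}{2}\log(1+x)$ is genuinely of order $x$ there.
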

\begin{proof}
The series expansion of $e^x$ yields for $t\rightarrow 0$
\[
\frac{1}{t}\left(\frac{1}{e^t-1}-\frac{1}{t}+\frac{1}{2}\right) = \frac{1}{t^2}\left(\frac{1}{1+t/2+t^2/6+\mathcal{O}(t^3)}-1+\frac{t}{2}\right) = 1/12 + \mathcal{O}(t).
\]
For $t\rightarrow\infty$ this expression tends to 0, in particular, it is bounded for all positive $t$. Hence for $x\rightarrow\infty$ the integral in question becomes
\[
\frac{1}{12}\int_0^\infty e^{-xt}\;dt + \mathcal{O}\left(\int_0^\infty te^{-xt}\;dt\right) = \frac{1}{12 x} + \mathcal{O}\left(\frac{1}{x^2}\right).
\]
For $x\rightarrow 0$ we use $e^{-xt}=1-xt + \mathcal{O}(x^2t^2)$ and obtain
\[
\int_0^1 \frac{1}{t}\left(\frac{1}{e^t-1}-\frac{1}{t}+\frac{1}{2}\right) e^{-xt}\;dt = \int_0^1 \frac{1}{t}\left(\frac{1}{e^t-1}-\frac{1}{t}+\frac{1}{2}\right)\;dt + \mathcal{O}(x)
=C_1+\mathcal{O}(x)
\]
and 
\[
\int_1^\infty \frac{1}{t(e^t-1)} e^{-xt}\;dt = \int_1^\infty \frac{1}{t(e^t-1)} \;dt -x \int_1^\infty \frac{1}{e^t-1} \;dt + \mathcal{O}(x^2).
\]
As a function of $x$, the integral $\int_1^{\infty} \frac{e^{-xt}}{t^2}\;dt$ defines a function that is differentiable from the right in 0 and has bounded second derivative in $(0,\infty)$, hence, for $x\geq 0$ this integral is $1+\mathcal{O}(x)$. 

Finally 
\begin{multline*}
\int_1^\infty\frac{e^{-xt}}{2t}\;dt = \int_x^\infty\frac{e^{-s}}{2s}\;ds =\\
 \int_x^1\frac{dt}{2t} + \int_0^1\frac{e^{-t}-1}{2t}\;dt +\int_1^\infty\frac{e^{-t}}{2t}\;dt+\int_0^x\frac{1-e^{-t}}{2t}\;dt\\
  = -\frac{1}{2}\log x + C_3 + \mathcal{O}(x).
\end{multline*}
Combining these estimates our claim follows.
\end{proof}
From this we obtain
\begin{Lem}
\label{Lem:U}
We have
\[
U_{k, \ell} =1-\frac{1}{12}\left(\frac{1}{k}+\frac{1}{\ell}-\frac{1}{k+\ell}\right)+\mathcal{O}\left(\frac{1}{\min(k, \ell)^2}\right).
\]
\end{Lem}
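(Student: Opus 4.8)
The plan is to take the logarithm of $U_{k,\ell}$, which by the definition in Theorem~\ref{thm:Raab} is precisely the integral
\[
\log U_{k,\ell} = \int_0^\infty \frac{1}{t}\left(\frac{1}{e^t-1}-\frac{1}{t}+\frac{1}{2}\right)\left(e^{-(k+\ell)t}-e^{-kt}-e^{-\ell t}\right)\;dt.
\]
As noted in the proof of Lemma~\ref{Lem:integrals}, the factor $\frac{1}{t}\left(\frac{1}{e^t-1}-\frac{1}{t}+\frac{1}{2}\right)$ is bounded on $(0,\infty)$, so each of the three exponential pieces yields an absolutely convergent integral and I may split by linearity to obtain $\log U_{k,\ell}=I(k+\ell)-I(k)-I(\ell)$, where $I(x)$ denotes the integral appearing in Lemma~\ref{Lem:integrals} evaluated at $x$.

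Since $k,\ell\geq 1$ we have $k+\ell,k,\ell\geq 1$, so the first estimate of Lemma~\ref{Lem:integrals} applies to all three terms. Substituting gives
\[
\log U_{k,\ell} = -\frac{1}{12}\left(\frac{1}{k}+\frac{1}{\ell}-\frac{1}{k+\ell}\right) + \mathcal{O}\left(\frac{1}{k^2}\right)+\mathcal{O}\left(\frac{1}{\ell^2}\right)+\mathcal{O}\left(\frac{1}{(k+\ell)^2}\right).
\]
Because $k+\ell\geq\min(k,\ell)$ each of the three error terms is at most $\mathcal{O}(1/\min(k,\ell)^2)$, and they combine into a single term of this size.

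Finally I would exponentiate. Writing $\log U_{k,\ell}=A+E$ with main part $A = -\frac{1}{12}\left(\frac{1}{k}+\frac{1}{\ell}-\frac{1}{k+\ell}\right) = \mathcal{O}(1/\min(k,\ell))$ and error $E = \mathcal{O}(1/\min(k,\ell)^2)$, the expansion $e^y = 1 + y + \mathcal{O}(y^2)$ as $y\to 0$ yields
\[
U_{k,\ell} = 1 + A + E + \mathcal{O}\bigl((A+E)^2\bigr) = 1 + A + \mathcal{O}\left(\frac{1}{\min(k,\ell)^2}\right),
\]
which is the claimed formula. The argument is essentially mechanical; the only point needing a little care is checking that exponentiation does not degrade the error term. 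This holds because the full exponent is $\mathcal{O}(1/\min(k,\ell))$, so its square, and hence the quadratic remainder in the exponential, is already $\mathcal{O}(1/\min(k,\ell)^2)$, as is the linear contribution of $E$.
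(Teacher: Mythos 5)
Your proposal is correct and follows essentially the same route as the paper: apply Lemma~\ref{Lem:integrals} to each of the three exponential terms (all with argument $\geq 1$), collect the errors into $\mathcal{O}\left(1/\min(k,\ell)^2\right)$, and then expand the exponential, using $k,\ell\geq 1$ to ensure the exponent is bounded so that the quadratic remainder is again $\mathcal{O}\left(1/\min(k,\ell)^2\right)$. The paper's proof is just a terser version of exactly this argument.
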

\begin{proof}
From Lemma~\ref{Lem:integrals} we obtain
\[
U_{k, \ell} = \exp\left(\frac{1}{12}\left(\frac{1}{k+\ell}-\frac{1}{k}-\frac{1}{\ell}\right)+\mathcal{O}\left(\frac{1}{\min(k, \ell)^2}\right)\right),
\]
inserting the Taylor series for $\exp$ and using the fact that $k, \ell\geq 1$ our claim follows.
\end{proof}
Next we compute $c_\nu(x)$.
\begin{Lem}
\label{Lem:c}
If $\nu x\geq 1$, then
\[
c_\nu(x)=1+\frac{1}{12\nu(1+x)}-\frac{1}{12\nu x}+\frac{1}{12\nu} + \mathcal{O}\left(\frac{1}{\nu^2 \min(1,x^2)}\right).
\]
If $\nu x\leq 1$, then
\[
c_\nu(x) = K\sqrt{\nu x} + \mathcal{O}\left(\sqrt{\frac{x}{\nu}}+(\nu x)^{3/2}\right)
\]
for some constant $K$.
\end{Lem}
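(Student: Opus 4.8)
The plan is to pass to logarithms, so that everything reduces to the single integral evaluated in Lemma~\ref{Lem:integrals}. Abbreviating
\[
I(y)=\int_0^\infty \frac{1}{t}\left(\frac{1}{e^t-1}-\frac{1}{t}+\frac{1}{2}\right)e^{-yt}\,dt,
\]
the definition of $c_\nu(x)$ reads $\log c_\nu(x)=I(\nu(1+x))-I(\nu x)-I(\nu)$. Since $\nu\geq 1$, the arguments $\nu(1+x)$ and $\nu$ always exceed $1$, so only $\nu x$ can be small; the dichotomy $\nu x\gtrless 1$ therefore decides which branch of Lemma~\ref{Lem:integrals} is applied to the middle term, and this is precisely the case distinction in the statement.

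In the regime $\nu x\geq 1$ all three arguments are at least $1$, and I would insert the first estimate of Lemma~\ref{Lem:integrals} three times to obtain
\[
\log c_\nu(x)=\frac{1}{12\nu(1+x)}-\frac{1}{12\nu x}-\frac{1}{12\nu}+\mathcal{O}\!\left(\frac{1}{\nu^2\min(1,x^2)}\right),
\]
the error term collecting the three contributions $\mathcal{O}(\nu^{-2}(1+x)^{-2})$, $\mathcal{O}((\nu x)^{-2})$ and $\mathcal{O}(\nu^{-2})$, of which the second dominates when $x\leq 1$. Each main term is $\mathcal{O}((\nu x)^{-1})=\mathcal{O}(1)$, so applying $e^u=1+u+\mathcal{O}(u^2)$ yields the claimed expansion once one checks that the quadratic remainder is swallowed by the error term.

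In the regime $\nu x\leq 1$ the outer terms $I(\nu(1+x))$ and $I(\nu)$ are each $\mathcal{O}(1/\nu)$ by the first estimate, while the middle term is governed by the second estimate of Lemma~\ref{Lem:integrals}, which (with the logarithm entering as $-\tfrac12\log(\nu x)$, as its proof shows) reads $I(\nu x)=-\tfrac12\log(\nu x)+C+\mathcal{O}(\nu x)$ and hence supplies the dominant contribution $-I(\nu x)=\tfrac12\log(\nu x)-C+\mathcal{O}(\nu x)$. Collecting terms gives $\log c_\nu(x)=\tfrac12\log(\nu x)-C+\mathcal{O}(\nu x)+\mathcal{O}(1/\nu)$, and exponentiating,
\[
c_\nu(x)=K\sqrt{\nu x}\,\bigl(1+\mathcal{O}(\nu x)+\mathcal{O}(1/\nu)\bigr),\qquad K=e^{-C}.
\]
Expanding the bracket turns the two error factors into $\mathcal{O}((\nu x)^{3/2})$ and $\mathcal{O}(\sqrt{\nu x}/\nu)=\mathcal{O}(\sqrt{x/\nu})$, which is exactly the shape asserted in the lemma.

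The bulk of the work is routine bookkeeping of error terms. The one point that genuinely needs care is the exponentiation in the first regime: when $x$ is small the main terms are only $\mathcal{O}(1)$ rather than $o(1)$, so I must verify that the quadratic remainder $\mathcal{O}((\nu x)^{-2})$ arising from $e^u=1+u+\mathcal{O}(u^2)$ really fits inside $\mathcal{O}(\nu^{-2}\min(1,x^2)^{-1})$ and does not degrade the estimate.
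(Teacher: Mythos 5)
Your argument is correct and is essentially the paper's own proof: write $\log c_\nu(x)=I(\nu(1+x))-I(\nu x)-I(\nu)$, apply Lemma~\ref{Lem:integrals} to each of the three arguments (the dichotomy $\nu x\geq 1$ versus $\nu x\leq 1$ governing only the middle one), and exponentiate, checking that the quadratic remainder from $e^u=1+u+\mathcal{O}(u^2)$ fits inside $\mathcal{O}\left(\frac{1}{\nu^2\min(1,x^2)}\right)$, which it does since $u=\mathcal{O}\left(\frac{1}{\nu\min(1,x)}\right)$. One point deserves explicit mention rather than the silent gloss you give it: in the first regime you derive $-\frac{1}{12\nu}$, whereas the lemma as stated (and the paper's own proof of it) carries $+\frac{1}{12\nu}$; your sign is the correct one, because the definition of $c_\nu(x)$ has $-e^{-\nu t}$ in the integrand, so $-I(\nu)$ contributes $-\frac{1}{12\nu}$. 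The paper's $+\frac{1}{12\nu}$ is a typo, harmless downstream since in Lemma~\ref{Lem:V} that term is absorbed into the error term regardless of its sign, but your write-up should say that you are proving the corrected statement rather than claim the computation "yields the claimed expansion" verbatim. Similarly, your reading of the second branch of Lemma~\ref{Lem:integrals} as $-\frac{1}{2}\log x+C$ (its statement misprints the sign; its proof clearly produces $-\frac{1}{2}\log x$) is exactly what the paper's treatment of the regime $\nu x\leq 1$ uses implicitly, so there your argument and the paper's coincide.
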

\begin{proof}
If $\nu x>1$, then we apply Lemma~\ref{Lem:integrals} to obtain
\begin{eqnarray*}
c_\nu(x) & = & \exp\left(\frac{1}{12\nu(x+1)}-\frac{1}{12\nu x}+\frac{1}{12\nu}+\mathcal{O}\left(\frac{1}{\nu^2\min(1, x^2)}\right)\right)\\
 & = & 1 + \frac{1}{12\nu(x+1)}-\frac{1}{12\nu x}+\frac{1}{12\nu}+\mathcal{O}\left(\frac{1}{\nu^2\min(1, x^2)}\right).
\end{eqnarray*}
If $\nu x<1$, then $\nu(x+1)\geq 1$, and we obtain
\begin{eqnarray*}
c_\nu(x) & = & \exp\left(\frac{1}{12\nu(x+1)}+\frac{1}{2}\log(\nu x)+C+\frac{1}{12\nu}+\mathcal{O}\left(\frac{1}{\nu^2}+\nu x\right)\right)\\
 & = & K\sqrt{\nu x} + \mathcal{O}\left(\sqrt{\frac{x}{\nu}}+(\nu x)^{3/2}\right).
\end{eqnarray*}
\end{proof}

We now compute $V_{k,\ell}$.

\begin{Lem}
\label{Lem:V}
We have
\[
V_{k,\ell} = \frac{1}{2} + \mathcal{O}\left(\frac{1}{\sqrt{k}}+\frac{1}{\sqrt{\ell}}\right)
\]
\end{Lem}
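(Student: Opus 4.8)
The plan is to set $x=k/\ell$ and to isolate the main term by replacing $c_\nu(x)$ with its limit $1$, writing
\[
V_{k,\ell}=\frac{\sqrt\ell}{2\pi}\sum_{\nu=1}^\infty\frac{1}{\sqrt\nu(\nu+\ell)}+\frac{\sqrt\ell}{2\pi}\sum_{\nu=1}^\infty\frac{c_\nu(x)-1}{\sqrt\nu(\nu+\ell)}.
\]
The first sum carries the value $\frac{1}{2}$: the function $f(t)=t^{-1/2}(t+\ell)^{-1}$ is positive and decreasing on $(0,\infty)$, so $\int_1^\infty f\le\sum_{\nu\ge1}f(\nu)\le\int_0^\infty f$, and the gap is $\int_0^1 f=\mathcal O(1/\ell)$; meanwhile the substitution $t=w^2$ gives $\int_0^\infty f(t)\,dt=2\int_0^\infty(w^2+\ell)^{-1}\,dw=\pi/\sqrt\ell$. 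Hence the first sum equals $\frac{1}{2}+\mathcal O(1/\sqrt\ell)$, already of the desired shape.

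It then remains to bound the correction $\frac{\sqrt\ell}{2\pi}\sum_\nu\frac{c_\nu(x)-1}{\sqrt\nu(\nu+\ell)}$, and I would split it at $\nu=1/x=\ell/k$ according to the two cases of Lemma~\ref{Lem:c}. For $\nu x\ge1$, crude bounds on the three explicit terms and the error in the first estimate give $|c_\nu(x)-1|=\mathcal O\bigl(1/(\nu\min(1,x))\bigr)$; inserting this and multiplying by $\sqrt\ell$ yields $\mathcal O(1/\sqrt\ell)$ when $x\ge1$ and $\mathcal O(1/\sqrt k)$ when $x<1$, the latter using $\sum_{\nu\ge\ell/k}\nu^{-3/2}\asymp\sqrt{k/\ell}$ together with $\nu+\ell\asymp\nu$ or $\asymp\ell$ on the two sub-ranges $\nu>\ell$ and $\nu\le\ell$. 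For the complementary range $\nu x<1$, which is nonempty only when $k<\ell$, the second estimate of Lemma~\ref{Lem:c} only gives $c_\nu(x)=\mathcal O(1)$, so I would simply use $|c_\nu(x)-1|=\mathcal O(1)$; since here $\nu<\ell/k\le\ell$ forces $\nu+\ell\asymp\ell$, this contributes at most $\sqrt\ell\cdot\ell^{-1}\sum_{\nu<\ell/k}\nu^{-1/2}\asymp\sqrt\ell\cdot\ell^{-1}\sqrt{\ell/k}=\mathcal O(1/\sqrt k)$.

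Adding the main term and the two contributions of the correction gives $V_{k,\ell}=\frac{1}{2}+\mathcal O(1/\sqrt k+1/\sqrt\ell)$. I expect the main obstacle to be exactly the regime $\nu x<1$ occurring when $k<\ell$: there $c_\nu(x)$ is of order $\sqrt{\nu x}$ and exhibits no termwise smallness, so the estimate cannot proceed by comparing $c_\nu(x)$ to $1$ term by term. The point that rescues it is that all these summands have $\nu+\ell\asymp\ell$, so their aggregate weight $\sum_{\nu<\ell/k}\nu^{-1/2}\asymp\sqrt{\ell/k}$ is small enough to offset the prefactor $\sqrt\ell$ and produce precisely the bound $1/\sqrt k$.
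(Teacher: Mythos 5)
Your proof is correct, and it rests on the same core ingredients as the paper's: the two regimes of Lemma~\ref{Lem:c} split at $\nu=\ell/k$, and the evaluation of $\sum_{\nu}\nu^{-1/2}(\nu+\ell)^{-1}$ by comparison with $\int_0^\infty t^{-1/2}(t+\ell)^{-1}\,dt=\pi/\sqrt{\ell}$. The bookkeeping, however, is genuinely different and somewhat cleaner: you subtract the constant $1$ from $c_\nu(k/\ell)$ over the \emph{whole} range and evaluate the complete sum $\sum_{\nu\geq 1}$ once, pushing everything else into a correction bounded termwise; the paper instead discards the head $\nu\leq\ell/k$ outright (using the sharper bound $c_\nu\ll\sqrt{\nu k/\ell}$ there) and must then evaluate the truncated sum $\sum_{\nu>\ell/k}$, which forces a case distinction $k>\ell$ versus $k\leq\ell$ and a separate comparison with the truncated integral $\int_{1/k}^\infty\frac{dt}{\sqrt{t}(t+1)}$. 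Your crude bound $|c_\nu(x)-1|=\mathcal{O}(1)$ on the head is weaker than the paper's, but as you correctly observe, the factor $\nu+\ell\asymp\ell$ there makes it sufficient for the target error $\mathcal{O}(1/\sqrt{k})$, so you reach the same conclusion with fewer cases. One step worth making explicit in a final write-up: your unified bound $|c_\nu(x)-1|\ll 1/(\nu\min(1,x))$ in the regime $\nu x\geq 1$ absorbs the error term $1/(\nu^2\min(1,x^2))$ of Lemma~\ref{Lem:c} precisely because $1/(\nu\min(1,x))\leq 1$ there; this is correct, but it is exactly where the hypothesis $\nu x\geq 1$ enters, and it deserves a line of justification.
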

\begin{proof}
We have
\[
\sum_{\nu\leq \ell/k}\frac{c_\nu(k/\ell)}{\sqrt{\nu}(\nu+\ell)}\ll \sum_{\nu\leq \ell/k}\frac{\sqrt{\nu k/\ell}}{\sqrt{\nu}(\nu+\ell)}\ll \frac{1}{\sqrt{\ell k}}
\] 
thus,
\begin{eqnarray*}
V_{k,\ell} & = & \frac{\sqrt{\ell}}{2\pi}\sum_{\nu>\ell/k}\frac{1}{\sqrt{\nu}(\nu+\ell)}\left(1+\frac{1}{12\nu\frac{k+\ell}{\ell}} - \frac{1}{12\nu\frac{k}{\ell}} + \frac{1}{12\nu}+\mathcal{O}\left(\frac{1}{\nu^2\min(1, k^2/\ell^2)}\right)\right)+ \mathcal{O}\left(\frac{1}{\sqrt{k}}\right)\\
 & = & \frac{\sqrt{\ell}}{2\pi}\sum_{\nu>\ell/k}\frac{1}{\sqrt{\nu}(\nu+\ell)}\left(1 - \frac{1}{12\nu\frac{k}{\ell}}+\mathcal{O}\left(\frac{1}{\nu^2\min(1, k^2/\ell^2)}\right)\right)+ \mathcal{O}\left(\frac{1}{\sqrt{k}}+\sum_{\nu>\ell/k}\frac{1}{\nu^{3/2}\sqrt{\ell}}\right)\\ 
 & = & \frac{\sqrt{\ell}}{2\pi}\sum_{\nu>\ell/k}\frac{1}{\sqrt{\nu}(\nu+\ell)}\left(1 - \frac{1}{12\nu\frac{k}{\ell}}+\mathcal{O}\left(\frac{1}{\nu^2\min(1, k^2/\ell^2)}\right)\right)+ \mathcal{O}\left(\frac{1}{\sqrt{k}}+\frac{1}{\sqrt{\ell}}\right)\\
\end{eqnarray*}

If $k>\ell$ we obtain
\begin{eqnarray*}
V_{k,\ell} & = & \frac{\sqrt{\ell}}{2\pi}\sum_{\nu=1}^\infty\frac{1}{\sqrt{\nu}(\nu+\ell)} + \mathcal{O}\left(\frac{1}{\sqrt{k}}+\frac{1}{\sqrt{\ell}}+\sqrt{\ell}\sum_{\nu=1}^\infty\frac{1}{\nu^{3/2}(\nu+\ell)}\right)\\
 & = & \frac{1}{2\pi}\int_0^\infty\frac{dt}{\sqrt{t}(t+1)} + \mathcal{O}\left(\frac{1}{\sqrt{k}}+\frac{1}{\sqrt{\ell}}\right)\\
  & = & \frac{1}{2} + \mathcal{O}\left(\frac{1}{\sqrt{\ell}}\right).
\end{eqnarray*}
We have
\[
\frac{\sqrt{\ell}}{2\pi}\sum_{\nu>\ell}\frac{1}{\sqrt{\nu}(\nu+\ell)}\left(1 - \frac{1}{12\nu\frac{k}{\ell}}+\mathcal{O}\left(\frac{1}{\nu^2\min(1, k^2/\ell^2)}\right)\right) = \frac{\sqrt{\ell}}{2\pi}\sum_{\nu>\ell}\frac{1}{\sqrt{\nu}(\nu+\ell)}+\mathcal{O}\left(\frac{1}{\sqrt{k}}\right),
\]
and for $\ell/k\leq N\leq \ell$
\[
\sum_{N\leq \nu\leq 2N}\frac{1}{\nu^{3/2}(\nu+\ell)} \leq \frac{1}{\sqrt{N}\ell}
\]
as well as
\[
\sum_{N\leq \nu\leq 2N}\frac{1}{\nu^{5/2}(\nu+\ell)} \leq \frac{1}{N^{3/2}\ell}
\]
and therefore
\[
\sqrt{\ell}\sum_{\nu\geq\ell/k}\frac{1}{\sqrt{\nu}(\nu+\ell)}\cdot\frac{1}{\nu\frac{k}{\ell}} = \frac{\ell^{3/2}}{k}\sum_{\nu\geq \ell/k}\frac{1}{\nu^{3/2}(\nu+\ell)} =\mathcal{O}\left(\frac{1}{\sqrt{k}}\right)
\]
and
\[
\sqrt{\ell}\sum_{\nu\geq \ell/k} \frac{1}{\sqrt{\nu}(\nu+\ell)}\cdot\frac{1}{\nu^2\min(k^2/\ell^2)} = \frac{\ell^{5/2}}{k^2}\sum_{\nu\geq \ell/k}\frac{1}{\nu^{5/2}(\nu+\ell)} =\mathcal{O}\left(\frac{1}{\sqrt{k}}\right).
\]
Using these estimates we obtain
\begin{multline*}
\frac{\sqrt{\ell}}{2\pi}\sum_{\ell/k\leq\nu\leq\ell}\frac{1}{\sqrt{\nu}(\nu+\ell)}\left(1 - \frac{1}{12\nu\frac{k}{\ell}}+\mathcal{O}\left(\frac{1}{\nu^2\min(1, k^2/\ell^2)}\right)\right) =\\
\frac{\sqrt{\ell}}{2\pi}\sum_{\ell/k\leq\nu\leq\ell}\frac{1}{\sqrt{\nu}(\nu+\ell)} +\mathcal{O}\left(\frac{1}{\sqrt{k}}\right).
\end{multline*}
For $k\leq \ell$ we obtain
\begin{eqnarray*}
V_{k,\ell} & = & \frac{\sqrt{\ell}}{2\pi}\sum_{\nu\geq \ell/k}\frac{1}{\sqrt{\nu}(\nu+\ell)} +\mathcal{O}\left(\frac{1}{\sqrt{k}}\right)\\
 & = & \frac{1}{2\pi}\int_{1/k}^\infty\frac{dt}{\sqrt{t}(t+1)} +\mathcal{O}\left(\frac{1}{\sqrt{k}}\right)\\
 & = & \frac{1}{2\pi}\int_0^\infty\frac{dt}{\sqrt{t}(t+1)} +\mathcal{O}\left(\frac{1}{\sqrt{k}}\right)\\
 & = & \frac{1}{2}+\mathcal{O}\left(\frac{1}{\sqrt{k}}\right),
\end{eqnarray*}
and the proof is complete.
\end{proof}

\section{Proof of Theorem~\ref{thm:main}}

We first prove (\ref{eq:k small}). Note that this inequality is only interesting if the error term is $o(1)$, in particular we may assume that $k^2<\ell$. Under this assumption we have
\begin{eqnarray*}
P_{k, \ell} & = & \left(\frac{\ell}{\ell+k}\right)^{k+\ell}\sum_{\nu=0}^{k-1}\binom{k+\ell}{\nu}\left(\frac{k}{\ell}\right)^\nu\\
 & = & \sum_{\nu=0}^{k-1}\left(\frac{\ell}{\ell+k}\right)^{k+\ell-\nu}\frac{k^\nu}{\nu!} \exp\left(\sum_{\mu=1}^\nu\log\frac{k+\ell-\mu}{k+\ell}\right)\\
 & = & \sum_{\nu=0}^{k-1}\left(\frac{\ell}{\ell+k}\right)^{k+\ell-\nu}\frac{k^\nu}{\nu!} \exp\left(-\sum_{\mu=1}^\nu\frac{\mu}{k+\ell}+ \mathcal{O}\left(\frac{\mu^2}{(k+\ell)^2}\right)\right)\\
 & = & \sum_{\nu=0}^{k-1}\left(\frac{\ell}{\ell+k}\right)^{k+\ell-\nu}\frac{k^\nu}{\nu!} \exp\left(\mathcal{O}\left(\frac{\nu^2}{\ell}\right)\right)\\
 & = & \sum_{\nu=0}^{k-1}\left(\frac{\ell}{\ell+k}\right)^{k+\ell}\frac{k^\nu}{\nu!} \exp\left(\mathcal{O}\left(\frac{k^2}{\ell}\right)\right)\\ 
 & = & \exp\left((k+\ell)\log\frac{\ell}{\ell+k}\right) \sum_{\nu=0}^{k-1}\frac{k^\nu}{\nu!} \exp\left(\mathcal{O}\left(\frac{k^2}{\ell}\right)\right)\\
 & = & \exp\left(-k+\mathcal{O}\left(\frac{k^2}{\ell}\right)\right) \sum_{\nu=0}^{k-1}\frac{k^\nu}{\nu!}\\
 & = & e^{-k}\sum_{\nu=0}^{k-1}\frac{k^\nu}{\nu!}+\mathcal{O}\left(\frac{k^2}{\ell}\right).
\end{eqnarray*}
The proof of (\ref{eq:ell small}) is quite similar. Analogously to the previous case we may assume that $\ell^2<k$.

\begin{eqnarray*}
P_{k, \ell}  & = & \left(\frac{\ell}{\ell+k}\right)^{k+\ell}\left(\left(\frac{\ell+k}{\ell}\right)^{k+\ell} - \sum_{\nu=k}^{k+\ell}\binom{k+\ell}{\nu}\left(\frac{k}{\ell}\right)^\nu\right)\\
 & = & 1-\left(\frac{\ell}{\ell+k}\right)^{k+\ell} \sum_{\mu=0}^{\ell}\binom{k+\ell}{\mu}\left(\frac{k}{\ell}\right)^{k+\ell-\mu}\\
 & = & 1-\left(\frac{k}{\ell+k}\right)^{k+\ell} \sum_{\mu=0}^{\ell}\binom{k+\ell}{\mu}\left(\frac{\ell}{k}\right)^{\mu}\\
 & = & 1-\exp\left(-\ell+\mathcal{O}\left(\frac{\ell^2}{k}\right)\right)\sum_{\mu=0}^{\ell}\frac{\ell^\mu}{\mu!}\left(\frac{k+\ell}{k}\right)^{\mu}\\
 & = & 1-e^{-\ell}\sum_{\mu=0}^{\ell}\frac{\ell^\mu}{\mu!}.
\end{eqnarray*}

Finally (\ref{eq:general}) follows from Theorem~\ref{thm:Raab}, Lemma~\ref{Lem:U} and Lemma~\ref{Lem:V}.

\end{document}